\documentclass{article}
\date{\today}
\title{The second secant zeta function and its anti-periodization evaluated at $1/\sqrt{n}$} 
\author{Bruno~D.~Welfert\thanks{School of Mathematical and Statistical Sciences, Arizona State University, Tempe AZ 85287, USA, welfert@asu.edu}}
\usepackage{rotating}
\usepackage{bm,amsmath,amssymb,amsfonts}
\usepackage{MnSymbol}
%\DeclareMathAlphabet{\mathbbm}{U}{bbm}{m}{n}% from bbm.sty
%\usepackage{natbib,graphicx,float,siunitx,pdflscape,tikz,enumitem}
\DeclareMathVersion{normal2}
\usepackage{tikz}
\usepackage{siunitx}
\mathversion{normal2}
\mathversion{normal}
\usepackage{multirow}
\usepackage{comment}
\usepackage{alltt}
\usepackage{listings}
\lstset{ %
language=Matlab,                % choose the language of the code
basicstyle=\upshape\ttfamily,       % the size of the fonts that are used for the code
numbers=none,                   % where to put the line-numbers
numberstyle=\scriptsize,      % the size of the fonts that are used for the line-numbers
commentstyle=\color{green!50!black},
keywordstyle=\color{blue},       % keyword style
morekeywords={RG,direct},
stringstyle=\color{red!50!blue},     % string literal style
xleftmargin=2mm,
numberstyle=\tiny\color{gray!80},
%stepnumber=1,                   % the step between two line-numbers. If it is 1 each line will be numbered
numbersep=5pt,                  % how far the line-numbers are from the code
backgroundcolor=\color{white},  % choose the background color. You must add \usepackage{color}
keepspaces=true,
%showspaces=false,               % show spaces adding particular underscores
showstringspaces=false,         % underline spaces within strings
%showtabs=false,                 % show tabs within strings adding particular underscores
%%frame=single,           % adds a frame around the code
%tabsize=2,          % sets default tabsize to 2 spaces
%captionpos=b,           % sets the caption-position to bottom
%breaklines=true,        % sets automatic line breaking
%breakatwhitespace=false,    % sets if automatic breaks should only happen at whitespace
escapeinside=||,         % if you want to add a comment within your code
}

\graphicspath{{./Figs/}}

\newcommand{\todo}[1]{{\color{red}#1}}

\newtheorem{theorem}{\bf Theorem}[section]
\newtheorem{conjecture}{\bf Conjecture}[section]
\newtheorem{remark}{Remark}[section]
\newenvironment{proof}{\textbf{Proof}.}{$\blacksquare$\\[10pt]}
\newenvironment{ack}{\textbf{Acknowledgements}.}{}
\newcommand{\rj}[1]{$\dfrac{1}{\sqrt{#1}}$}
\newcommand{\pj}[2]{$\dfrac{#1}{#2}$}

\begin{document}
\maketitle

%\subject{11M41, 33E20}

%\keywords{Dirichlet series, similarity, Abel equation, fluid dynamics}

%\corres{B. Welfert\\}

\begin{abstract}
The 2-periodic secant zeta function $\psi(r)$ and its $1$-antiperiodization 
$f(r)=(\psi(r)-\psi(r+1))/2$, arising from a fluid dynamics application, are
investigated. In particular, their values at $1/\sqrt{n}$ for positive integers $n$ are determined, using a reformulation of an identity satisfied by $\psi$ as an Abel equation.
\end{abstract}

\section{Introduction}

The primary goal of this study is the evaluation of
\begin{equation}\label{f}
f(r) = \frac{4}{\pi^2}\sum_{k\ge0} \frac{\sec[(2k+1)\pi r]}{(2k+1)^2},
\end{equation}
at $r=1/\sqrt{n}$ for integer $n$.
The Dirichlet series \eqref{f} arises in a recent study of the response of a vertically stratified fluid inside a two-dimensional square cavity to horizontally forced harmonic perturbations \cite{GYWL20}.
In that study, $r$ is a parameter controlling the frequency of
oscillations in the low amplitude forcing, with values $r\in(0,1/2)$ associated with the existence of internal waves, while
$f(r)$ characterizes the velocity, pressure and temperature response, 
in the limits of low viscosity and low amplitude forcing,
along characteristic lines emanating from the midpoints of both 
vertical sides of the cavity, and somewhat controls 
the response elsewhere. In particular, resonance is shown in \cite{GYWL20} 
to occur at precisely the singularities $r=p/(2q)$ of $f$, with both $p$ and $q$ odd integers, while the response at other values rational values $r$ has piecewise linear velocity and temperature fields, in agreement with the $1/(2k+1)^2$ behavior of the terms in the series \eqref{f}. 
The specific value $f(1/\sqrt{8})$ is used in \cite{GYWL20}
to illustrate a self-similar behavior of the response at a frequency associated with a quadratic irrational $r$. The objective of this work is to evaluate $f$ at general $1/\sqrt{n}$ for integer $n$ and, in the process, get a better understanding of the self-similar behavior observed in the fluid problem.

\maketitle

\noindent
The function $f$ is related to the secant zeta function 
\begin{equation}\label{psi}
\psi(r) = \frac{4}{\pi^2}\sum_{n\ge1} \frac{\sec[n\pi r]}{n^2}
\end{equation}
introduced in \cite{LRR14} (where it is denoted $\psi_2$, hence ``second'', 
without the factor $\pi^2/2$) via
\begin{equation}\label{relation2}
f(r)=\frac{\psi(r)-\psi(r+1)}{2}=-f(r+1).
\end{equation}
Equation \eqref{relation2} shows that $f$ is $1$-antiperiodic, and 
emphasizes how it can be constructed from the $2$-periodic function 
$\psi$ via a process we call anti-periodization. 
Another useful relation,
\begin{equation}\label{relation}
f(r) = \psi(r)-\frac{\psi(2r)}{4}.
\end{equation}
can be verified directly and is equivalent to \cite[Equation 3.3]{LRR14}.
Together with the symmetry
\begin{equation}\label{fsym}
f(-r)=f(r),
\end{equation}
equation \eqref{relation} also implies anti-symmetry about $r=1/2$, i.e., 

\begin{equation}\label{fantisym}
f(1-r)=f(-r+1)=-f(-r)=-f(r),
\end{equation}
which can be restated as
\begin{equation}\label{fantisym2}
f(1/2-s)=-f(1/2+s)
\end{equation}
for $s=r-1/2$. 
Thus, $f$ is also an anti-symmetrization of $\psi$. 
The values
\begin{equation}\label{psi0f0}
\psi(0) = \frac{4}{\pi^2}\sum_{n\ge0}\frac{1}{n^2} = \frac{2}{3}
\quad\text{and}\quad
f(0)=\frac{4}{\pi^2}\sum_{k\ge0}\frac{1}{(2k+1)^2}=\frac{1}{2}
=-f(1)
\end{equation}
follow from standard results.
Since $\psi$ is even, $\psi(-r)=\psi(r)$, and $2$-periodic,  
it is sufficient to restrict $r$ to $(0,1)$ to evaluate $\psi$.
In view of \eqref{relation2} and \eqref{fantisym} it is sufficient to consider the range $r\in(0,1/2)$ for $f$. 
Figure~\ref{fig:fandpsi} shows graphs of $\psi$ and $f$ for a selection of \num{2e5} values $r$ uniformly distributed in the range $(0,1)$. 
The series \eqref{f} and \eqref{psi} are evaluated using \num{e5} terms each.
The graph of $f$ is verified to be symmetric about the point $(1/2,0)$,
according to \eqref{fantisym2}. The values $f(r)$ are also observed to be at or near $1/2$ for many values $r$ away from $r=1/2$.

Both $\psi$ and $f$ are discontinuous everywhere, having singularities at rational values $r=(2p+1)/(2q)$ for any integers $p$ and $q\ne0$, with the restriction that $q$ be odd for $f$. 
The fact that $f$ ``only'' has half as many singularities as $\psi$, combined with the filtering property of the periodization \eqref{relation2} ($f(r)=i\sin(\pi r)\psi(r)$ for $\psi(r)=\exp(i\pi r)$), makes the graph of $f$ appear, in some sense, better behaved.

\begin{figure}
\begin{center}
\includegraphics[width=.9\linewidth]{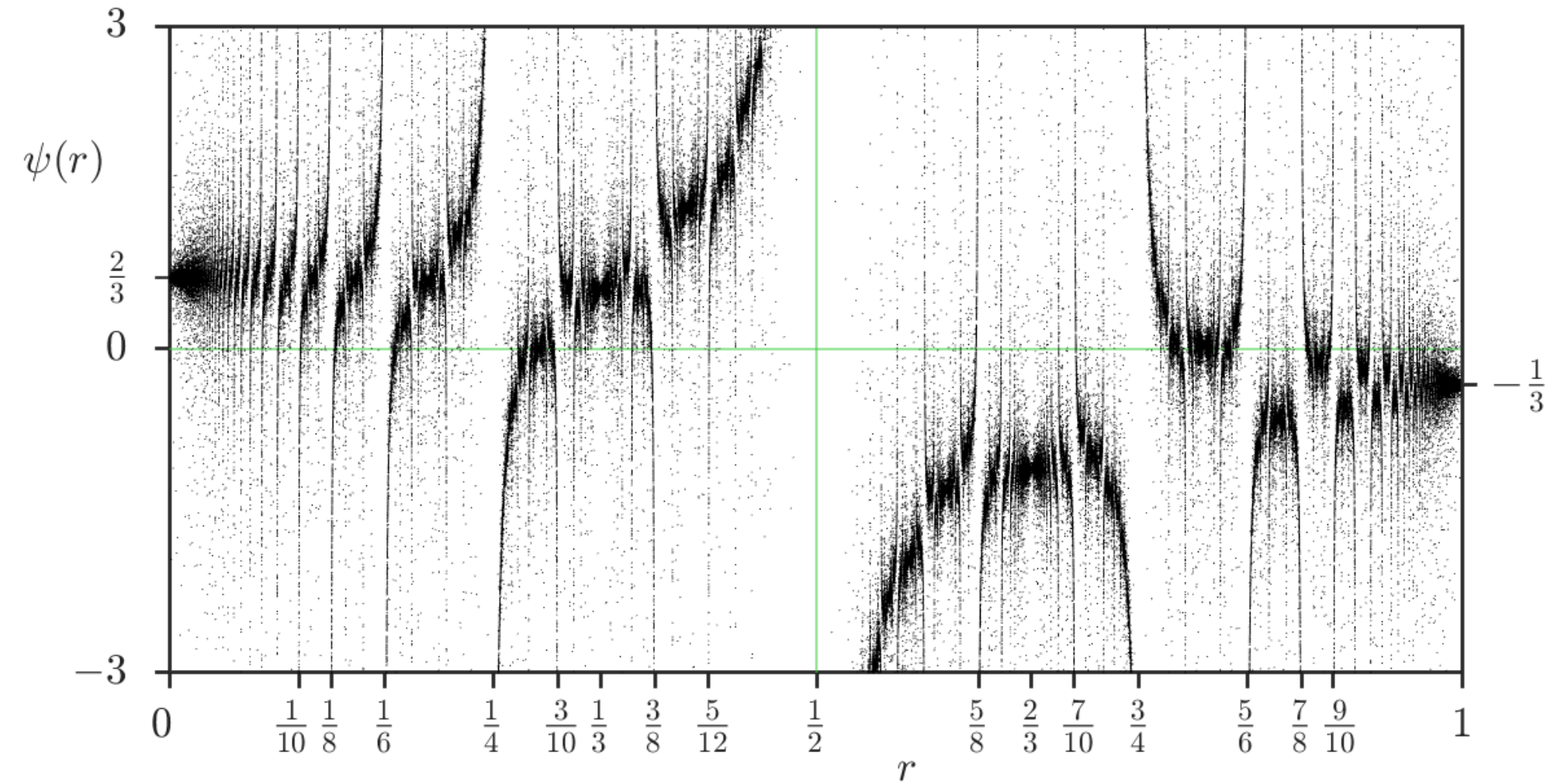} \\
\includegraphics[width=.9\linewidth]{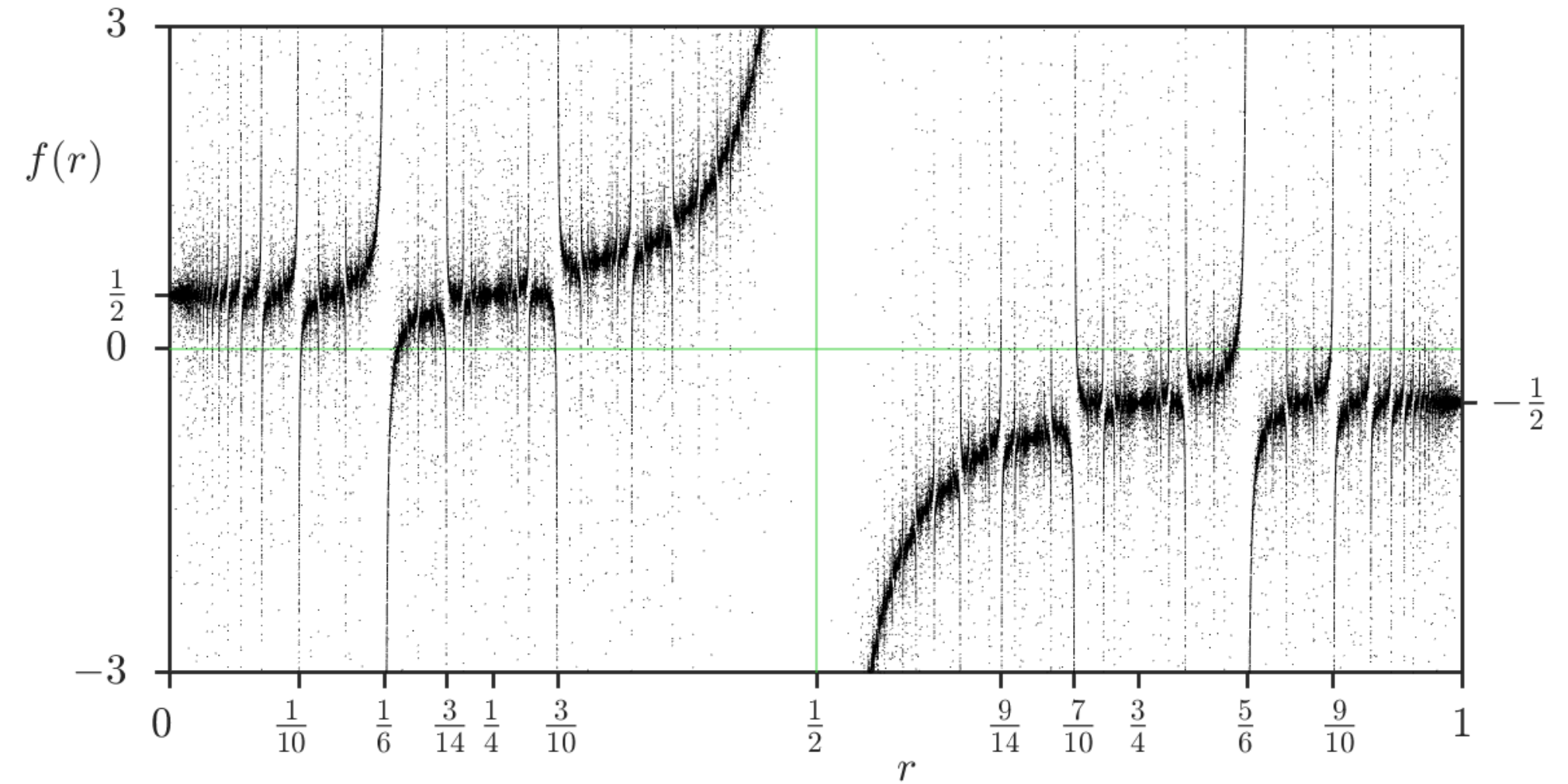}
\caption{Functions $\psi$ (top) and $f$ (bottom) evaluated at \num{2e5} uniformly distributed random numbers $r\in(0,1)$ using \num{e5} terms
in the series \eqref{f} and \eqref{psi}. See Appendices~\ref{appA} and \ref{appC} for numerical evaluations of the series using \textsc{Matlab}.}
\label{fig:fandpsi}
\end{center}
\end{figure}

Certain values of $f$ for $r\in(0,1/2)$ can be readily obtained from 
known values of $\psi$ using \eqref{relation} and the $2$-periodicity of $f$. For example,
\begin{equation}\label{frppp1}
f(r) = f(r+2p) =
f\left(\sqrt{2p(2p+1)}\right) = 2p+\frac{2}{3}-\frac{1}{4}\cdot\frac{2}{3} 
=  2p+\frac{1}{2}
\end{equation}
for integers $p$ at $r=1/[1+\sqrt{1+1/(2p)}]=\sqrt{2p(2p+1)}-2p$ is obtained by combining results from \cite[page 202]{LRR14}.
However, none of the results from \cite{LRR14}, \cite{Str16}, or \cite{BeSt16} addresses values of $\psi$ at $r=1/\sqrt{n}$, 
including $1/\sqrt{2}$ and $1/\sqrt{8}$, needed to evaluate $f$ at 
the latter via \eqref{relation}.
An explicit expression of $\psi(r)$ in terms of values of the first four Bernoulli polynomials can be found in \cite[Eq. 5]{ChGr14}. However, this expression relies on the identification of a matrix $V\in\text{SL}_2(\mathbb{Z})$ (with integer entries and determinant $1$), which for $r$ of the form $1/\sqrt{n}$ only seems possible (without exploiting periodicity) for $n$ equal to the product of two consecutive integers, $p(p+1)$. Even then, the expression is complicated, and was merely used in \cite{ChGr14} to prove its rationality; see Appendix \ref{appA} for details.

It is shown here how the identity
\cite[Equation 3.8 in Theorem 3 with $k=2$]{LRR14} 
\cite[Equation 3 in Theorem 2.1 with $m=1$]{BeSt16}
\begin{equation}\label{eq3ref}
(1+\tau)\psi\left(\frac{\tau}{1+\tau}\right)
-(1-\tau)\psi\left(\frac{\tau}{1-\tau}\right)
=4\cdot\frac{\tau(2+\tau^2)}{6(1-\tau^2)}
\end{equation}
can be leveraged to achieve that goal, and show in particular that 
\begin{equation}\label{fresult}
f(1/\sqrt{8}) = 1.
\end{equation}
The right-hand side of \eqref{eq3ref} is, 
up to a factor $4$ introduced by the scaling in \eqref{psi},
the coefficient of $z$ in the Laurent series of the function 
$\sin[\tau z]/(\sin[(1+\tau)z]\sin[(1-\tau)z])$ of $z$.

The approach followed here uses, in essense, the same ideas based on modular representation adopted in \cite{LRR14} and \cite{BeSt16}.
The starting point, however, is a reformulation of \eqref{eq3ref}
as a classical Abel functional equation \cite[\S3.5C]{KCG90}
\begin{equation}\label{Abel}
\Phi(\varphi(r))-\Phi(r)=1
\end{equation}
with
\begin{equation}\label{rphig}
r=\frac{\tau}{1-\tau}, \quad \varphi(r)=\frac{r}{1+2r},
\quad\text{and}\quad \Phi(r)=\frac{3}{4}\left(\frac{\psi(r)}{r}+r\right),
\end{equation}
which provides a simpler path to explicit values of $\psi(r)$
and $f(r)$ at $r=1/\sqrt{n}$, including \eqref{fresult}.
Equation \eqref{Abel} has a long history, with solutions which can be locally continuous or completely (everywhere) discontinuous depending on the definition of the map $\varphi$, with the latter being the case here for the map $\varphi$ in \eqref{rphig}. Note that $\Phi(0)=\infty$ at the only fixed point $r=0$ of this $\varphi$. The Abel equation \eqref{Abel} can be interpreted as a discrete form of linearity for the function $\Phi$: it implies
\begin{equation}\label{discreteder}
\Phi(\varphi(r))-2\Phi(r)+\Phi(\varphi^{-1}(r))=0,
\end{equation}
whose left-hand side is a nonstandard second-order difference approximation
of the second derivative of $\Phi(r)$ on the non-uniform grid $[\varphi(r),r,\varphi^{-1}(r)]=[r/(1+2r),r,r/(1-2r)]$ ($r>0$),
which can be verified to be equivalent to a standard differencing of $r\Phi(r)$ on the same grid. It can also be turned into the homogeneous form
\begin{equation}\label{Abel_homogeneous}
\widetilde{\Phi}(\varphi(r))-\widetilde{\Phi}(r)=0
\end{equation}
for the function
\begin{equation}\label{Phitilde}
\widetilde{\Phi}(r) = \Phi(r)-\frac{1}{2r} 
= \frac{3}{4}\left(\frac{\Psi(r)-2/3}{r}+r\right)
\end{equation}

\S\ref{sec2} discusses how \eqref{Abel} 
can be exploited to obtain values $\psi(1/\sqrt{n})$ for certain integers $n$.  
A similar Abel equation for a function related to $f$ is derived in \S\ref{sec3} by leveraging the connection \eqref{relation2} between $f$ and $\psi$, leading to values $f(1/\sqrt{n})$, including \eqref{fresult}. Additional comments and extensions are discussed in
\S\ref{sec4}, which also includes a conjecture, supported by numerical evidence, addressing zeros of $f$ of the form $1/\sqrt{n}$.

\section{Values $\psi(1/\sqrt{n})$}
\label{sec2}

We first show equation \eqref{Abel} holds. One easily verifies that
\[ \varphi(r)=\frac{\tau}{1+\tau} \quad\text{and}\quad
\varphi(r)-r = -\frac{2\tau^2}{1-\tau^2}. \]
Dividing \eqref{eq3ref} throughout by $\tau$ gives
\[ \frac{\psi(\varphi(r))}{\varphi(r)}-\frac{\psi(r)}{r} 
= \frac{2}{3}\cdot\frac{2-2\tau^2+3\tau^2}{1-\tau^2}
=\frac{4}{3}-(\varphi(r)-r).
\]
Moving the quantity $\varphi(r)-r$ to the left-hand side, using the definition of $\Phi$ in \eqref{rphig} and scaling by the factor $4/3$ then lead to \eqref{Abel}. 
%Note that the fixed point $r=0$ of $\varphi$ corresponds to $\Phi(0)=\infty$, as
%$\psi(0)=2/3<\infty$. 
Equation \eqref{Abel} allows a straightforward evaluation of $\Phi$, 
and thus $\psi$, at iterated values
\begin{equation}\label{rk}
r_k = \varphi(r_{k-1}) = \varphi^k(r_0) = \frac{r_0}{1+2kr_0}, \quad k=1,2,\dots
\end{equation}
in terms of $\Phi(r_0)$, i.e., $\varphi(r_0)$.
Indeed, summing up \eqref{Abel} applied to $r=r_k$ for $k=1,\ldots,K$ for some
$K>0$ yields
\begin{equation}\label{eq0}
\Phi(r_K)-\Phi(r_0)=K,
\end{equation}
while summing up \eqref{Abel} applied to $r=r_k$ for $k=1,\ldots,2K$ yields
\begin{equation}\label{eq1}
\Phi(r_{2K})-\Phi(r_0)=2K.
\end{equation}
One easily verifies that 
\begin{equation}\label{rkinv}
\frac{1}{r_k} = \frac{1}{r_0}+2k
\end{equation}
for any $k$ and
\begin{equation}\label{means}
\frac{1}{r_K} = \frac{1}{2}\left(\frac{1}{r_0}+\frac{1}{r_{2K}}\right),
\quad
\Phi(r_K) = \frac{\Phi(r_0)+\Phi(r_{2K})}{2},
\end{equation}
i.e., $r_K$ is the harmonic mean of $r_0$ and $r_{2K}$, while $\Phi(r_K)$ is the arithmetic mean of $\Phi(r_0)$ and $\Phi(r_{2K)}$.
If we select $r_0$ such that
\begin{equation}\label{r0rk}
r_0-r_{2K}=r_0-\frac{r_0}{1+4Kr_0}=2p
\end{equation}
for some $p\in\mathbb{Z}$, $p\ne0$, then the solution
\begin{equation}\label{r0}
r_0 = p+p\sqrt{1+\frac{1}{2pK}}
\end{equation}
of \eqref{r0rk} is such that
\begin{equation}\label{rKinv}
\frac{1}{r_K}=\frac{1}{r_0}+2K = 2K\sqrt{1+\frac{1}{2pK}} = 
\sqrt{\frac{2K(2pK+1)}{p}}
\end{equation}
is the square root of an integer if $p|2K$, precisely the type of values $r$ at which we are interested in evaluating $\psi$. For this $r_0$, we also have
\begin{equation}\label{r0+r2K}
\frac{r_0r_{2K}}{r_K} = \frac{r_0+r_{2K}}{2} = r_0-p
= p\,\sqrt{1+\frac{1}{2pK}} = \frac{p}{2Kr_K}.
\end{equation}
Moreover, the 2-periodicity of $\psi$ implies
$\psi(r_{2K})=\psi(r_0)$, i.e.,
\begin{equation}\label{eq2}
\frac{\Phi(r_{2K})}{r_0}-\frac{\Phi(r_0)}{r_{2K}}
=\frac{3}{4}\frac{r_{2K}^2-r_0^2}{r_0r_{2K}}
=-\frac{3p}{2}\left(\frac{1}{r_0}+\frac{1}{r_{2K}}\right)
=-\frac{3p}{r_K}
\end{equation}
using \eqref{rphig}, \eqref{means} and \eqref{r0rk}.
Equations \eqref{eq1} and \eqref{eq2} form a linear system for $\Phi(r_{2K})$ and $\Phi(r_0)$, whose solution can be substituted into \eqref{means} to obtain $\Phi(r_K)$ and thus $\psi(r_K)$. Using \eqref{r0+r2K}, we get
\begin{equation}\label{gr2Kgr0}
\begin{bmatrix}\Phi(r_{2K})\\\Phi(r_0)\end{bmatrix}
%=\frac{r_0r_{2K}}{r_0-r_{2K}}
%\begin{bmatrix} 2K/r_{2K}+3p/r_K \\ 2K/r_0+3p/r_K \end{bmatrix}
=\frac{K}{p}
\begin{bmatrix} r_0 \\ r_{2K} \end{bmatrix}
+\frac{3}{4}(r_0+r_{2K})\begin{bmatrix} 1\\1 \end{bmatrix}
\end{equation}
and
\begin{equation}\label{grK}
\Phi(r_K) = \left(\frac{K}{2p}+\frac{3}{4}\right)(r_0+r_{2K})
=\frac{1}{2r_K}\left(1+\frac{3p}{2K}\right).
\end{equation}
Finally, 
\begin{equation}\label{psirK0}
\psi(r_K) = \frac{4}{3}\,r_K\Phi(r_K)-r_K^2 = \frac{2}{3}\left(1+\frac{3p}{2K}\right)-r_K^2=\frac{2}{3}+\frac{p}{K}-r_K^2.
\end{equation}
\begin{theorem}\label{th2.1}
Let $K>0$, $p\in\mathbb{Z}$, $p\ne0$, and $q=2K(2pK+1)$. Then
\begin{equation}\label{psirK}
\psi\left(\sqrt{p/q}\right) 
= 2/3+p/K-p/q.
\end{equation}
\end{theorem}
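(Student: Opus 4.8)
The plan is to exploit the Abel equation \eqref{Abel} together with the $2$-periodicity of $\psi$ to evaluate $\Phi$, and hence $\psi$, at the distinguished iterate $r_K$. The governing observation is that summing \eqref{Abel} along the orbit $r_k=\varphi^k(r_0)$ telescopes: applying it for $k=1,\dots,2K$ produces the single relation \eqref{eq1}, namely $\Phi(r_{2K})-\Phi(r_0)=2K$, which by itself leaves one degree of freedom in the pair $(\Phi(r_0),\Phi(r_{2K}))$. The initial point $r_0$ thus plays the role of a free parameter to be tuned.

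The key idea I would use to close the system is to choose $r_0$ so that $r_0$ and $r_{2K}$ differ by an even integer, $r_0-r_{2K}=2p$ as in \eqref{r0rk}; then the $2$-periodicity of $\psi$ forces $\psi(r_{2K})=\psi(r_0)$, which upon rewriting through the $\Phi$--$\psi$ correspondence of \eqref{rphig} yields the second, independent linear relation \eqref{eq2}. Solving the resulting $2\times2$ system \eqref{eq1}--\eqref{eq2} for $\Phi(r_0)$ and $\Phi(r_{2K})$, and then invoking the arithmetic-mean identity in \eqref{means}, $\Phi(r_K)=\tfrac12(\Phi(r_0)+\Phi(r_{2K}))$, delivers the closed form \eqref{grK} for $\Phi(r_K)$. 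Converting back via $\psi(r)=\tfrac43 r\Phi(r)-r^2$ gives the expression \eqref{psirK0}, $\psi(r_K)=\tfrac23+p/K-r_K^2$.

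It then remains only to identify the argument $r_K$. Solving the constraint \eqref{r0rk} gives the explicit root \eqref{r0}, and substituting into \eqref{rkinv} produces \eqref{rKinv}, whence $1/r_K=\sqrt{q/p}$ with $q=2K(2pK+1)$; that is, $r_K=\sqrt{p/q}$ and $r_K^2=p/q$. Inserting this into \eqref{psirK0} returns the claimed value $\psi(\sqrt{p/q})=\tfrac23+p/K-p/q$.

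I expect the main obstacle to be the construction rather than the bookkeeping. The one genuine piece of insight is recognizing that the single parameter $r_0$ can be tuned \emph{simultaneously} so that (i) the difference $r_0-r_{2K}$ is an even integer, activating the $2$-periodicity that closes the linear system, and (ii) the induced harmonic mean $r_K$ is the square root of a rational of the prescribed form. Verifying that the positive root \eqref{r0} of the quadratic \eqref{r0rk} indeed reduces $1/r_K$ to $\sqrt{q/p}$ is the computation demanding the most care, together with checking the sign of $p$ so that $p/q\ge0$ and $\sqrt{p/q}$ is real.
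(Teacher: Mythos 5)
Your proposal is correct and follows essentially the same route as the paper's own argument: telescoping the Abel equation \eqref{Abel} along the orbit to get \eqref{eq1}, imposing the constraint $r_0-r_{2K}=2p$ so that $2$-periodicity supplies the second relation \eqref{eq2}, solving the resulting linear system, passing through the arithmetic-mean identity \eqref{means} to reach \eqref{grK}, and converting back to $\psi$ via \eqref{psirK0} with $r_K^2=p/q$ from \eqref{rKinv}. No gaps; the point you flag about the sign of $p$ is harmless since $q=2K(2pK+1)$ automatically has the same sign as $p$ for nonzero integer $p$ and positive integer $K$.
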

Table \ref{table1} lists values of \eqref{psirK}
for several pairs $(K,p)$ such that $q/p$ is integer.
The values \eqref{psirK} were already known to be rational 
\cite[Theorem 3.2]{BeSt16}. The columns associated with $p=\pm1$
are obtained in Appendix \ref{appB} via a modular interpretation 
of the above strategy based an expansion of $\psi$ in terms of periodicized Bernoulli polynomials.

\begin{sidewaystable}
\begin{center}
\renewcommand{\arraystretch}{2}
\medmuskip=0mu
\begin{tabular}{@{}c|c|ccccccccccc@{}}
\multicolumn{1}{c}{}& \raisebox{-8pt}{\tikz[scale=.8]{\draw(0,1)--(1,0);\node at (.25,.3){$K$};\node at (.75,.7){$p$};}\hspace*{-6pt}}
&-6&  -4   &  -3   &  -2   &  -1   &   1   &   2   &   3   &   4   &6\\\hline\hline
& 1 &&       &       &\rj{3} &\rj{2} &\rj{6} &\rj{5} &       &       &\\
$r_K$
& 2 &&\rj{15}&       &\rj{14}&\rj{12}&\rj{20}&\rj{18}&       &\rj{17}&\\
& 3 &\rj{35}&       &\rj{34}&\rj{33}&\rj{30}&\rj{42}&\rj{39}&\rj{38}& &\rj{37}\\[5pt]\hline\hline

& 1 &&       &       &-\pj{5}{3}&-\pj{5}{6}&\pj{3}{2}&\pj{37}{15} &    &   &\\
$\psi(r_K)$
& 2 &&-\pj{7}{5}&       &-\pj{17}{42}&\pj{1}{12}&\pj{67}{60}&\pj{29}{18}&       &\pj{133}{51} &\\
& 3 &-\pj{143}{105}&       &-\pj{37}{102}&-\pj{1}{33}&\pj{3}{10}&\pj{41}{42}&\pj{17}{13}&\pj{187}{114}&    &\pj{293}{111}\\
\end{tabular}
\caption{Roots $r_K$ of integer inverses from \eqref{rKinv} (top) 
and values $\psi(r_K)$ from \eqref{psirK} (bottom) obtained for integer values of $p\ne0$ and $K>0$.}
\label{table1}
\end{center}
\end{sidewaystable}

\begin{remark}
The value 
\[ \psi(r_0)
=\frac{4}{3}r_0\Phi(r_0)-r_0^2
=r_0r_{2K}\left(1+\frac{4K}{3p}\right)
=\frac{2}{3}+\frac{p}{2K}
\]
obtained from \eqref{gr2Kgr0} and \eqref{r0+r2K} is equivalent to a formula appearing in \cite[page 844]{BeSt16}.
It also reduces to \cite[Equation 4.7]{LRR14} when $p=2j$ is even
(i.e., $r_0-r_{2K}$ is a multiple of $4$). 
\end{remark}

\begin{remark}
As $K\to\infty$ for fixed $p$ we recover $\psi(0)=2/3$ in \eqref{psirK},
despite the fact that $\psi$ is nowhere continuous.
\end{remark}

\begin{remark}
Values from Table~\ref{table1} can be combined, using \eqref{relation},
to obtain values of $f$ at $r$ of the form $1/\sqrt{n}$ for certain $n$. For example,
\begin{equation}\label{f12}
f(1/\sqrt{12}) = \psi(1/\sqrt{12})-\frac{\psi(1/\sqrt{3})}{4}
=\frac{1}{12}+\frac{5}{12}=\frac{1}{2},
\end{equation}
or
\begin{equation}\label{f20}
f(1/\sqrt{20}) = \psi(1/\sqrt{20})-\frac{\psi(1/\sqrt{5})}{4}
=\frac{67}{60}-\frac{37}{60}=\frac{1}{2}.
\end{equation}
These values confirm the high frequency of the value $1/2$ among 
the sequence $\{f(1/\sqrt{n})\}_{n\ge1}$. 
See \S\ref{sec3} for additional insight.
\end{remark}

\begin{remark}
The Abel equation \eqref{Abel} can be reformulated as the 
similarity property
\begin{equation}\label{similarity0}
\Pi(\alpha r)
=\Pi(\alpha r+(1-\alpha)0) = \alpha\Pi(r)+(1-\alpha)\Pi(0)
\end{equation}
for the shifted function $\Pi(r)=\psi(r)+r^2=4r\Phi(r)/3$, 
with $\Pi(0)=\psi(0)=2/3$ and $\alpha=1/(1+2r)$, or
\begin{equation}\label{homothety0}
\begin{bmatrix} \alpha r \\ \Pi(\alpha r) \end{bmatrix}
=\begin{bmatrix} 0 \\ \Pi(0) \end{bmatrix} +\alpha\begin{bmatrix} r-0 \\ \Pi(r)-\Pi(0) \end{bmatrix}.
\end{equation}
The graph of $\Pi$ is thus invariant under the homothety of centre $(0,\Pi(0))=(0,2/3)$ and similarity ratio $\alpha<1$. 
The results obtained in \S\ref{sec2} for $\Psi$,
such as \eqref{psirK0}, can be restated, perhaps more naturally, 
in terms of $\Pi$. 
$\varphi$ maps the interval $[1/4,1/2]$ to $[1/6,1/4]$ and, more generally the interval 
$[1/(2k+2),1/(2k)]$ to $[1/(2k+4),1/(2k+2)]$ for $k>0$. This shows that all values $\Pi(r)$, and thus $\psi(r)$, for $r\in(0,1/2)$ can be deduced from those in the interval $[1/4,1/2]$ via the affine transformation \eqref{homothety0}.
\end{remark}

\section{Values $f(1/\sqrt{n})$}
\label{sec3}

We now show how $f$ can also be evaluated at values of the form $1/\sqrt{n}$
for certain integers $n$. Similarly to \eqref{rphig} we introduce
\begin{equation}\label{Phi}
g(r)=\varphi^2(r)=\frac{r}{1+4r}, \quad 
G(r) = \frac{f(r)}{2r}.
\end{equation}
These maps are related to the functions $\varphi$ and $\Phi$ in \eqref{rphig} by
\begin{equation}\label{varphi2Phi}
2g(r) = \varphi(2r)
\end{equation}
and, using \eqref{relation},
\begin{equation}\label{g2G}
G(r) = \frac{2\Phi(r)-\Phi(2r)}{3}.
\end{equation}
From the relation \eqref{Abel} we obtain
\begin{align}
G(g(r))-G(r) 
&=\frac{2}{3}\left(\Phi(g(r))-\Phi(r)\right)-\frac{1}{3}\left(\Phi(2g(r))-\Phi(2r)\right) \label{Abelf}\\
&=\frac{2}{3}\left(\Phi(\varphi^2(r))-\Phi(r)\right)-\frac{1}{3}\left(\Phi(\varphi(2r))-\Phi(2r)\right) \notag\\
&=\frac{4}{3}-\frac{1}{3} = 1.\notag
\end{align}
Therefore, $G$ also satisfies an Abel equation.
This equation can be restated directly in terms of $f$
as the similarity relation
\begin{equation}\label{similarity}
f(\alpha r) = f(\alpha r + (1-\alpha)0) = \alpha f(r) + (1-\alpha)f(0), 
\end{equation}
with $\alpha = 1/(1+4r)$, using $f(0)=1/2$ from \eqref{psi0f0};
compare to \eqref{similarity0}.
Equation \eqref{similarity} means that
\begin{equation}\label{homothety}
\begin{bmatrix} \alpha r \\ f(\alpha r) \end{bmatrix}
=\begin{bmatrix} 0 \\ f(0) \end{bmatrix} +\alpha\begin{bmatrix} r-0 \\ f(r)-f(0) \end{bmatrix},
\end{equation}
i.e., the graph of $f$ is invariant under the homothety with centre $(0,f(0))=(0,1/2)$ and similarity ratio $\alpha=1/(1+4r)<1$. 
The function $g$ maps the interval $[1/6,1/2]$ to $[1/10,1/6]$ and, more generally the interval 
$[1/(2k+1),1/(2k-1)]$ to $[1/(2k+3),1/(2k+1)]$ for $k>0$. This shows that all values $f(r)$ for $r\in(0,1/2)$ can be deduced from those in the interval 
$[1/6,1/2]$ via the simple affine transformation \eqref{homothety}.
This similarity can be used to readily evaluate $f$ at certain values of $r$.
For example, using $r=-1$ in \eqref{similarity} and the $1$-antiperiodicity
of $f$,
\begin{equation}\label{fonethird}
f(1/3) = -\frac{1}{3}f(-1)+\frac{4}{3}f(0)=\frac{5}{3}f(0)=\frac{5}{6}.
\end{equation}
The value $f(1/4)$ is somewhat trickier to obtain from \eqref{similarity}.
Heuristically, letting $r=\infty$ yields
\begin{equation}\label{fonefourth}
f(1/4) = 0\cdot f(\infty)+1\cdot f(0)=f(0)=\frac{1}{2}.
\end{equation}
However, the statement \eqref{fonefourth} lacks mathematical rigor
for two reasons: (i) the meaning of $f(\infty)$ is unclear, even in an average (weak) sense, as $f$ takes on all possible values as $r\to\infty$ by periodicity, and (ii) $f$ is not continuous anywhere. Nevertheless, the above value can independently be shown to be correct; see Appendix \ref{appC} for details.

\begin{figure}
\begin{center}
\includegraphics[width=.9\linewidth]{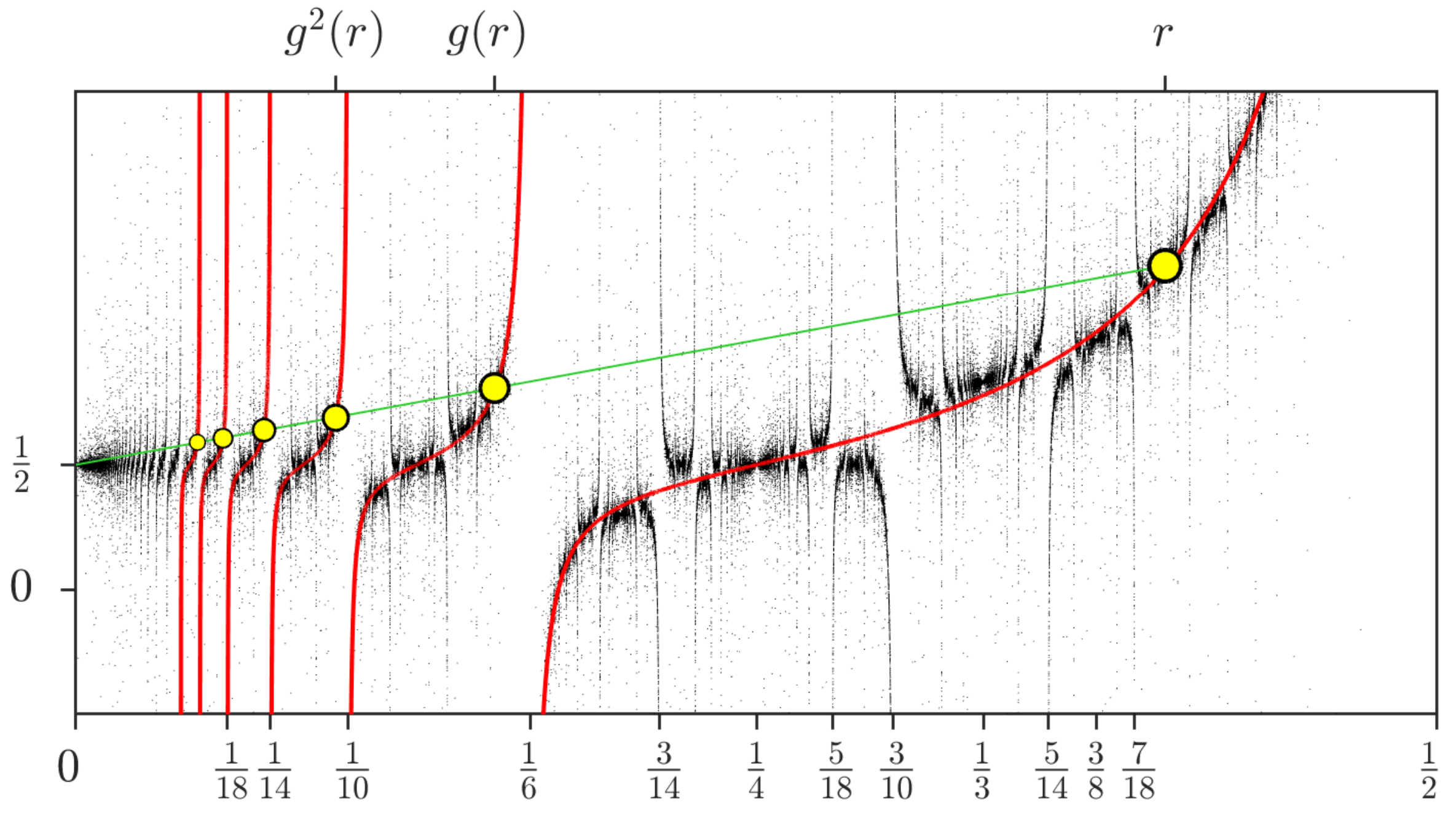}
\end{center}
\caption{Illustration of the invariance of the graph of $f$ under the homothety \eqref{homothety}. The red curves represent the function \eqref{rational_approx} in the interval $r\in(1/6,1/2)$ and its transforms under the homothety \eqref{homothety} for $r<1/6$.}
\label{fig:fsimilar}
\end{figure}
Figure~\ref{fig:fsimilar} illustrates the action of the homothety on the ``approximation''
\begin{equation}\label{rational_approx}
\frac{1}{18}+\frac{1}{36}\;\frac{1}{1-6r}+\frac{1}{4}\;\frac{1}{1-2r}
\end{equation}
of $f$ for $r\in(1/6,1/2)$. The expression \eqref{rational_approx}
matches the asymptotic behavior of $f$ at $r=(1/6)^+$ and $r=(1/2)^-$
(see Appendix \ref{appD} for details) as well as the value the data point \eqref{fonefourth} at the harmonic center of the interval $(1/6,1/2)$.  
Of course, $f$ has (infinitely) many singularities in this interval that are not represented by \eqref{rational_approx}. The graph from figure \ref{fig:fsimilar} shows an accumulation of points $(r,f(r))$
around $r=1/q$, e.g. for $q=3,4$. This behavior can be partly explained 
by noting that
\begin{equation}\label{cont}
f\left(\frac{\ell}{1+4\ell}\right)-f(1/4)
=\frac{f(\ell)+2\ell}{1+4\ell}-1/2
=\frac{(-1)^\ell-1}{2(1+4\ell)}\xrightarrow{\ell\to\infty}0
\end{equation}
using \eqref{similarity}, \eqref{fonefourth} 
and the $1$-antiperiodicity of $f$.
The following result provides an explicit value of $f$ at all such $r$.
\begin{theorem}\label{th3.1}
$f(1/q) = \begin{cases} 
1/2     & \text{if } q\cong 0 \text{ [mod 4]}, \\ 
1/2-1/q & \text{if } q\cong 1 \text{ [mod 4]}, \\ 
\infty  & \text{if } q\cong 2 \text{ [mod 4]}, \\ 
1/2+1/q & \text{if } q\cong 3 \text{ [mod 4]}. \\ 
\end{cases}$ 
\end{theorem}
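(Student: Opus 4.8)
The plan is to convert the similarity relation \eqref{similarity} into a recursion in $q$ that decreases $q$ by $4$ at each step and therefore preserves the residue of $q$ modulo $4$. The key observation is that the map $g(r)=r/(1+4r)$ from \eqref{Phi} sends $1/(q-4)$ to $1/q$, since $g(1/(q-4))=1/q$. This is precisely the instance of \eqref{similarity} obtained by taking $r=1/(q-4)$, for which $\alpha=1/(1+4r)=(q-4)/q$ and $\alpha r=1/q$. Substituting into \eqref{similarity} and using $f(0)=1/2$ gives, whenever $1/q$ and $1/(q-4)$ are regular points of $f$,
\begin{equation}
f(1/q)=\frac{q-4}{q}\,f(1/(q-4))+\frac{2}{q},
\end{equation}
equivalently $q\,f(1/q)=(q-4)\,f(1/(q-4))+2$.

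First I would record the four base values, one per residue class: $f(1/1)=f(1)=-1/2$ from \eqref{psi0f0}; $f(1/2)=\infty$ because $1/2=1/(2\cdot 1)$ is a singularity of $f$ (its denominator being odd); $f(1/3)=5/6$ from \eqref{fonethird}; and $f(1/4)=1/2$ from \eqref{fonefourth}. Next, setting $v_q=q\,f(1/q)$ turns the recursion into the telescoping relation $v_q-v_{q-4}=2$, so along each residue class $v_q=v_{q_0}+(q-q_0)/2$, where $q_0\in\{1,2,3,4\}$ is the smallest positive representative of the class. Evaluating $v_{q_0}=q_0\,f(1/q_0)$ from the base values ($v_1=-1/2$, $v_3=5/2$, $v_4=2$) and dividing by $q$ recovers the three finite cases: $q\equiv 0$ gives $v_q=q/2$ and $f(1/q)=1/2$; $q\equiv 1$ gives $v_q=(q-2)/2$ and $f(1/q)=1/2-1/q$; $q\equiv 3$ gives $v_q=(q+2)/2$ and $f(1/q)=1/2+1/q$.

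For the remaining class $q\equiv 2\pmod 4$, I would argue directly rather than through the recursion: writing $q=2m$ with $m$ odd, the value $1/q=1/(2m)$ is of the singular form $(2p+1)/(2q')$ with $q'=m$ odd and $p=0$, so $f(1/q)=\infty$ by the singularity structure recalled in the introduction. A short cross-multiplication check shows conversely that $1/q$ fails to be of that form whenever $q$ is odd or $q\equiv 0\pmod 4$, so $f(1/q)$ is genuinely finite in exactly the three other cases; this also licenses the application of \eqref{similarity} at the regular points $1/(q-4)$ and $1/q$ used in the recursion.

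I expect the only delicate point to be the base value $f(1/4)=1/2$, which is \emph{not} immediate from \eqref{similarity} alone: the homothety \eqref{homothety} is centred at $r=0$ and $1/4$ is only its ``harmonic centre'', pinned down solely through the heuristic $r\to\infty$ limit in \eqref{fonefourth}. I would therefore import $f(1/4)=1/2$ as established independently in Appendix~\ref{appC}. Once the four base values are in hand, the argument is a routine telescoping induction, and the residue classes mod $4$ emerge naturally from the step size $4$ of the map $g=\varphi^2$.
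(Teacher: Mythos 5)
Your proposal is correct and follows essentially the same route as the paper: your telescoped quantity $v_q=q\,f(1/q)$ is exactly $2G(1/q)$ for the paper's $G(r)=f(r)/(2r)$, so your recursion $v_q-v_{q-4}=2$ and its telescoping along each residue class are precisely the paper's summation of the Abel equation \eqref{Abelf} into \eqref{similarity2}, anchored at the same four base values $f(1)=-1/2$, $f(1/2)=\infty$, $f(1/3)=5/6$, $f(1/4)=1/2$ (the last imported, as in the paper, from Appendix~\ref{appC}). The only difference is cosmetic but slightly to your credit: for $q\equiv2$ [mod 4] you identify $1/q=1/(2m)$, $m$ odd, directly as a singularity of $f$, rather than formally propagating the value $\infty$ through the similarity relation as the paper does.
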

\begin{proof}
Let $s_0=r$ and $s_k=g(s_{k-1})=s_{k-1}/(1+4s_{K-1})=r/(1+4kr)$ for $k>0$.
Summing-up \eqref{Abelf} applied to $r=s_k$ for $k=1,\ldots,K$ yields
$G(s_K)=G(s_0)+K$, i.e., \eqref{similarity} more generally holds with 
$\alpha=1/(1+4Kr)$ with any integer $K>0$:
\begin{equation}\label{similarity2}
f\left(\frac{r}{1+4Kr}\right) = \frac{f(r)}{1+4Kr}+\frac{2Kr}{1+4Kr}.
\end{equation}
The factor $1/(1+4Kr)$ is the one-step similarity factor 
for the composition of $K$ copies of the homothety $\varphi$ 
with one-step factor $1/(1+4r)$.
Letting $q=4K+\ell$ with $\ell=1,2,3$, or $4$, and applying \eqref{similarity2} with $r=1/\ell$ yields 
\begin{equation}
f(1/q) = \frac{\ell f(1/\ell)+2K}{q} 
= \frac{1}{2}+\left(f(1/\ell)-\frac{1}{2}\right)\frac{\ell}{q}.
\end{equation}
The result then follows from $f(1)=-1/2$, see \eqref{psi0f0},
$f(1/2)=\infty$ (singularity), $f(1/3)=5/6$, see \eqref{fonethird}, and
$f(1/4)=1/2$, see \eqref{fonefourth}.
\end{proof}
Theorem \ref{th3.1} provides values of $f$ at all $1/q=1/\sqrt{n}$ with $q$ positive integer and $n=q^2$. In order to evaluate $f$ for other $n$ we consider, as in \S\ref{sec2}, a sequence
\begin{equation}
s_0=r_0, \quad s_k=g^k(s_0)=\frac{s_0}{1+4ks_0}=\frac{r_0}{1+4kr_0}=r_{2k}
\end{equation}
for $k\ge0$, so that
\begin{equation}\label{invs0s2K}
\frac{1}{s_0}=\frac{1}{s_K}-4K, \quad \frac{1}{s_{2K}}=\frac{1}{s_K}+4K.
\end{equation}
The determination of $f$ at values of the form $1/\sqrt{n}$ can now proceeds as in \S\ref{sec2} for $\psi$, with a little twist: the $1$-antiperiodicity of $f$ can be exploited to relax the constraint, similar to \eqref{r0rk} for the $\{s_k\}_{k\ge0}$ sequence, that $s_0-s_{2K}$ be an even integer.  
We thus select $s_0$ such that 
\begin{equation}\label{r0rkany}
s_0-s_{2K} = p
\end{equation}
for any nonzero $p\in\mathbb{Z}$, with $f(s_{2K})=(-1)^pf(s_0)$.
Substituting $S_{2K}$ in terms of $s_0$ in \eqref{r0rkany} and solving for $s_0$
yields the root
\begin{equation}\label{s0}
s_0 = \frac{p}{2}+\frac{p}{2}\sqrt{1+\frac{1}{2pK}}
\end{equation}
and thus,
\begin{equation}\label{sKs2K}
s_K = \frac{1}{2}\sqrt{\frac{p}{2K(2pK+1)}}, \quad
s_{2K} = -\frac{p}{2}+\frac{p}{2}\sqrt{1+\frac{1}{2pK}}.
\end{equation}
Note that $s_0$ in \eqref{s0} can be obtained from $r_0$ in \eqref{r0} via the 
substitution $p\to p/2$, and that $s_K$ is half of $r_K$ from \eqref{rKinv}. 
From the equation
\begin{equation}
2K=G(s_{2K})-G(s_0),
\end{equation}
obtained by summing up \eqref{Abelf} for $r=s_k$, $k=1,\ldots,2K$, we obtain,
using \eqref{invs0s2K},
\begin{align*}
2K=\frac{f(s_{2K})}{2s_{2K}}-\frac{f(s_0)}{2s_0} 
&= f(s_0)\left(\frac{(-1)^p}{2s_{2K}}-\frac{1}{2s_0}\right) \\
&= f(s_0)\left(\frac{(-1)^p-1}{2s_K}+2K[(-1)^p+1]\right),
\end{align*}
i.e.,
\begin{equation}\label{fs0}
f(s_0)=
\begin{cases} 
1/2 & \text{if $p$ is even}, \\ 
-2Ks_K & \text{if $p$ is odd}.
\end{cases}
\end{equation}
As a result,
\begin{align*}
f(s_K) &= 2s_KG(s_K)=2s_K(G(s_0)+K)\notag\\
&=2s_K\left(\frac{f(s_0)}{2s_0}+K\right)
=\begin{cases} 
\dfrac{2s_K(1+4Ks_0)}{4s_0} = \dfrac{1}{2} & \text{if $p$ is even}, \\ 
2Ks_K\left(1-\dfrac{s_K}{s_0}\right)= 8K^2 s_K^2 = \dfrac{pK}{2pK+1} & \text{if $p$ is odd}. 
\end{cases}
\end{align*}
To summarize,
\begin{theorem}\label{th3.2}
Let $K>0$, $p\in\mathbb{Z}$, $p\ne0$, and $q=2K(2pK+1)$. Then
\begin{equation}\label{fsK}
f\left(\frac{1}{2}\sqrt{p/q}\right)=\begin{cases} 
1/2 & \text{if $p$ is even}, \\ 
1/2-K/q & \text{if $p$ is odd}.
\end{cases}
\end{equation}
\end{theorem}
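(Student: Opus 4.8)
The plan is to mirror the strategy of \S\ref{sec2}, replacing the Abel equation \eqref{Abel} for $\Phi$ by the companion Abel equation \eqref{Abelf} for $G(r)=f(r)/(2r)$, whose associated map is $g=\varphi^2$, $g(r)=r/(1+4r)$. Since $G$ satisfies $G(g(r))-G(r)=1$, iterating $g$ and telescoping converts a single functional identity into explicit values of $G$, hence of $f$, at a prescribed point $s_K$; the target argument $\tfrac12\sqrt{p/q}$ should emerge as $s_K$, exactly as $r_K$ did in \eqref{rKinv}.

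First I would introduce the orbit $s_k=g^k(s_0)=s_0/(1+4ks_0)$, so that $1/s_k=1/s_0+4k$ and in particular the mean relations \eqref{invs0s2K} hold, $1/s_0=1/s_K-4K$ and $1/s_{2K}=1/s_K+4K$. The decisive new ingredient, absent from \S\ref{sec2}, is that the $1$-antiperiodicity \eqref{relation2} of $f$ lets me relax the commensurability requirement: rather than forcing $s_0-s_{2K}$ to be even, I only impose $s_0-s_{2K}=p$ for an arbitrary nonzero integer $p$, which gives $f(s_{2K})=(-1)^p f(s_0)$. Solving this constraint as a quadratic in $s_0$ yields \eqref{s0}, and a short computation produces $s_K=\tfrac12\sqrt{p/(2K(2pK+1))}=\tfrac12\sqrt{p/q}$, which is half the value $r_K$ of \eqref{rKinv}. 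Along the way I would check that $p/q>0$ for every integer $p\ne0$ and $K\ge1$, so the radicand is nonnegative.

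Next I would telescope \eqref{Abelf} along the orbit to obtain $G(s_{2K})=G(s_0)+2K$, expand this as $f(s_{2K})/(2s_{2K})-f(s_0)/(2s_0)=2K$, and substitute $f(s_{2K})=(-1)^p f(s_0)$ together with the mean relations \eqref{invs0s2K}. The bracketed coefficient of $f(s_0)$ then collapses to $((-1)^p-1)/(2s_K)+2K[(-1)^p+1]$, yielding the two-case value of $f(s_0)$ recorded in \eqref{fs0}: namely $1/2$ when $p$ is even and $-2Ks_K$ when $p$ is odd. Telescoping over $K$ steps gives $G(s_K)=G(s_0)+K$, whence $f(s_K)=2s_K\bigl(f(s_0)/(2s_0)+K\bigr)$; inserting the two cases and using $s_K/s_0=1-4Ks_K$ together with $q=2K(2pK+1)$ gives $f(s_K)=1/2$ for $p$ even and $f(s_K)=8K^2s_K^2=pK/(2pK+1)=1/2-K/q$ for $p$ odd, which is precisely \eqref{fsK}.

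I expect the main obstacle to be the bookkeeping of the parity-dependent sign $(-1)^p$ and its interaction with the mean relations: the algebraic skeleton is the same as in \S\ref{sec2}, but the antiperiodicity both enlarges the admissible set of $p$ (all nonzero integers, not just even ones) and forces the final answer to bifurcate according to the parity of $p$. Everything else — solving the quadratic \eqref{s0}, verifying the identity $1/2-K/q=pK/(2pK+1)$, and confirming positivity of $p/q$ — is routine and parallels the computations already carried out for $\psi$ in Theorem~\ref{th2.1}.
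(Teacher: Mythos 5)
Your proposal is correct and follows essentially the same route as the paper: the same orbit $s_k=g^k(s_0)$, the same relaxed closure $s_0-s_{2K}=p$ exploiting $1$-antiperiodicity via $f(s_{2K})=(-1)^pf(s_0)$, the same telescoping of \eqref{Abelf} to get \eqref{fs0}, and the same final evaluation $f(s_K)=2s_K\bigl(f(s_0)/(2s_0)+K\bigr)$ with the parity bifurcation. The only additions are routine checks (positivity of $p/q$, the identity $pK/(2pK+1)=1/2-K/q$) that the paper leaves implicit.
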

Theorem \ref{th3.2} confirms the high rate of occurence of 
the value $1/2$ in the sequence $\{f(1/\sqrt{n})\}_{n\ge1}$.
Equation \eqref{fs0} shows that $f$ also takes this value at many 
fully quadratic irrationals.
Table~\ref{table2} illustrates values of $f$ for several pairs $(K,p)$.
In particular, \eqref{fresult} is recovered for $K=1$ and $p=-1$.
Table~\ref{table2} also confirms both \eqref{f12} and \eqref{f20}.

\begin{table}
\begin{center}
\renewcommand{\arraystretch}{2}
\medmuskip=0mu
\begin{tabular}{@{}c|c|cccccc@{}}
\multicolumn{1}{c}{}& \raisebox{-8pt}{\tikz[scale=.8]{\draw(0,1)--(1,0);\node at (.25,.3){$K$};\node at (.75,.7){$p$};}\hspace*{-6pt}}
& -4   & -2   &  -1   &   1   &   2   &   4  \\\hline\hline
\multirow{2}{*}{$s_K$}
& 1 &  &\rj{12} &\rj{8} &\rj{24} &\rj{20} &   \\
& 2 &\rj{60}&\rj{56}&\rj{48}&\rj{80}&\rj{72}&\rj{68}\\[5pt]\hline\hline
\multirow{2}{*}{$f(s_K)$}
& 1 &  &\pj{1}{2}&1&\pj{1}{3}&\pj{1}{2} & \\
& 2 &\pj{1}{2}&\pj{1}{2}&\pj{2}{3}&\pj{2}{5}&\pj{1}{2}&\pj{1}{2}\\
\end{tabular}
\caption{Roots $s_K$ of integer inverses from \eqref{sKs2K} (top) 
and values $f(s_K)$ from \eqref{fsK} (bottom) obtained for integer values of $p\ne0$ and $K=1,2$.}
\label{table2}
\end{center}
\end{table}

\begin{remark}
Using \eqref{relation}, the values \eqref{fsK} can now be combined with \eqref{psirK} to obtain values of $\psi$ at $s_K$:
\begin{equation}\label{psisK}
\psi\left(\frac{1}{2}\sqrt{p/q}\right) = 
f\left(\frac{1}{2}\sqrt{p/q}\right)+\frac{1}{4}\psi\left(\sqrt{p/q}\right) 
= \frac{2}{3}+\frac{p}{4K}-\frac{p}{4q}
-\begin{cases} 0 & \text{if $p$ is even}, \\ K/q & \text{if $p$ is odd.}\end{cases}.
\end{equation}
\begin{table}
\begin{center}
\renewcommand{\arraystretch}{2}
\medmuskip=0mu
\begin{tabular}{@{}c|c|cccccc@{}}
\multicolumn{1}{c}{}& \raisebox{-8pt}{\tikz[scale=.8]{\draw(0,1)--(1,0);\node at (.25,.3){$K$};\node at (.75,.7){$p$};}\hspace*{-6pt}}
& -4   &  -2   &  -1   &   1   &   2   &   4  \\\hline\hline
\multirow{2}{*}{$s_K$}
& 1 &    &\rj{12} &\rj{8} &\rj{24} &\rj{20} &      \\
& 2 &\rj{60}&  \rj{56}&\rj{48}&\rj{80}&\rj{72}&   \rj{68}\\[5pt]\hline\hline
\multirow{2}{*}{$\psi(s_K)$}
& 1 &    &\pj{1}{12}&\pj{19}{24}&\pj{17}{24}&\pj{67}{60} &   \\
& 2 &\pj{3}{20}&\pj{67}{168}&\pj{33}{48}&\pj{163}{240}&\pj{65}{72}&\pj{235}{204}\\
\end{tabular}
\caption{Roots $s_K$ of integer inverses from \eqref{sKs2K} (top) 
and values $\psi(s_K)$ from \eqref{psisK} (bottom) obtained for integer values of $p\ne0$ and $K=1,2$. Some of these values duplicate results from Table~\ref{table1} obtained for other settings of $p$ and $K$.}
\label{table3}
\end{center}
\end{table}
\end{remark}

\section{Comments and extensions}
\label{sec4}
The values of the Dirichlet series defining the second zeta function 
$\psi$ from \eqref{psi} and its anti-periodization $f$ from \eqref{psi} were determined by exploiting similarity relations of the Abel type for functions related to $\psi$ and $f$.
The results provide values at $r=1/\sqrt{n}$ for certain $n$, in particular
for all perfect squares $n$ in the case of $f$. These results include the value $f(1/\sqrt{8})=1$ used in the fluid study \cite{GYWL20}. They do not include, however, cases where $f$ vanishes, which we leave as a conjecture
supported by numerical evidence:
\begin{conjecture}\label{con4.1}
$f(1/\sqrt{n})=0$ if and only if $n=16p(p+1)$ for some $p>0$.
\end{conjecture}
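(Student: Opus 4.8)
The plan is to recast the problem as a fixed-point statement for the group of symmetries under which the graph of $f$ is invariant, and to read off the value of $f$ from the way that group acts on ordinates. Besides the one-step similarity $f(g(r))=(f(r)+2r)/(1+4r)$ with $g(r)=r/(1+4r)$ [the iterated form \eqref{similarity}], I would use the antisymmetry $\sigma:r\mapsto 1-r$ with $f(\sigma r)=-f(r)$ from \eqref{fantisym} and the evenness $\rho:r\mapsto -r$ from \eqref{fsym} (together with \eqref{relation2}). Each of these maps the graph point $(r,f(r))$ to another graph point by a fractional-linear map of the plane; in homogeneous coordinates these maps are linear, $g$ acting by a unipotent shear and $\sigma,\rho$ by reflections. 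Consequently any word $\gamma$ in $g^{\pm1},\sigma,\rho$ sends $(r,f(r))$ to $(\gamma r,\,a_\gamma(r)f(r)+b_\gamma(r))$, where the multiplier $a_\gamma$ factors as a product of signs — one factor $-1$ for each reflection $\sigma$ (equivalently each antiperiod), and $+1$ for each $g$ or $\rho$ — divided by the usual Möbius denominator, while $b_\gamma$ is an explicit rational accumulating the additive contributions $G\mapsto G\pm1$ of the $g$-steps. If $\gamma$ fixes a point $r_*$ one obtains $f(r_*)\,(1-a_\gamma(r_*))=b_\gamma(r_*)$; in particular $f(r_*)=0$ precisely when $r_*$ is fixed by some $\gamma$ whose induced ordinate action is $y\mapsto -y$, i.e. $a_\gamma(r_*)=-1$ and $b_\gamma(r_*)=0$.

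For the sufficiency (``if'') direction I would produce such a $\gamma$ explicitly for $r_*=1/\sqrt{16p(p+1)}$. The arithmetic input is the identity $16p(p+1)=(4p+2)^2-4$, which makes $\sqrt{16p(p+1)}$ a quadratic irrational whose stabiliser in the relevant group is generated by the automorph coming from the Pell solution $x=8p^2+8p+1$, $y=2p+1$ of $x^2-16p(p+1)y^2=1$. The task is then to realise this automorph (or an appropriate half of it) as a palindromic word in $g^{\pm1}$ and $\sigma$ containing an odd number of reflections, so that $a_\gamma(r_*)=-1$; the palindromic symmetry of the word is exactly what should force the additive cocycle $b_\gamma(r_*)$ to cancel and vanish. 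The period structure of this word should mirror the continued fraction of $2\sqrt{(2p+1)^2-1}$ under the step-$4$ reduction induced by $g$. The case $p=1$ recovers $r_*=1/\sqrt{32}$; I would first verify that instance directly, both as a base check and to pin down the shape of the word, before treating general $p$ (ideally by an induction sending $p\mapsto p+1$ through the self-similar action \eqref{similarity}). This should also be cross-checked against the known nonzero values: $16p(p+1)$ is never a perfect square, consistent with Theorem~\ref{th3.1}, and lies outside the families of Theorem~\ref{th3.2}, consistent with those values being nonzero.

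I expect the necessity (``only if'') direction to be the real obstacle, and it is almost certainly the reason the statement is only a conjecture. The similarity machinery yields closed-form values of $f(1/\sqrt n)$ only when $1/\sqrt n$ lies in one of the orbits parametrised in Theorems~\ref{th3.1} and \ref{th3.2} (or their images under $\sigma,\rho$), and on all of those $f$ is provably nonzero; for the remaining $n$ there is no closed form at all, so a zero cannot be excluded by direct evaluation. Ruling out zeros therefore seems to require showing that $\sqrt n$ admits a fixing element with multiplier $-1$ and vanishing additive part \emph{only} when $n=16p(p+1)$ — a purely number-theoretic classification of which $\sqrt n$ possess the requisite symmetric (palindromic) period under the step-$4$ reduction, together with an argument that no further, ``accidental'', zeros arise for the uncomputed $n$. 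I would attempt the latter via an irrationality or equidistribution argument showing $f(1/\sqrt n)$ cannot hit $0$ by coincidence, but I do not see how to make either half unconditional with the tools developed here; closing this gap is precisely what is needed to upgrade Conjecture~\ref{con4.1} from numerical evidence to a theorem.
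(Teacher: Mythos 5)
First, a point of calibration: the paper does \emph{not} prove this statement. Conjecture~\ref{con4.1} is explicitly left open, supported only by numerical evidence, so there is no proof of the paper's to compare yours against. The honest verdict on your proposal is the one you yourself reach: it is a research program, not a proof, with genuine gaps in both directions. The framework (graph symmetries of $f$ generated by $g$, the antiperiodic shift, and $r\mapsto-r$; values at quadratic irrationals read off from fixed points of hyperbolic words; Pell arithmetic $x^2-16p(p+1)y^2=1$ with $x=8p^2+8p+1$, $y=2p+1$) is coherent and is indeed just the group-theoretic repackaging of the paper's Abel-equation/homothety machinery, so as a plan it is consistent with how Theorems~\ref{th3.1} and~\ref{th3.2} were obtained.

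Two concrete problems beyond the admitted incompleteness. (a) Your zero criterion is unattainable as stated. An irrational quadratic such as $r_*=1/\sqrt{16p(p+1)}$ can only be fixed by \emph{hyperbolic} elements of the symmetry group, and at such a fixed point the ordinate multiplier is $a_\gamma(r_*)=\pm1/(cr_*+d)=\pm\lambda^{-1}$ where $\lambda$ is the Pell unit, so $|a_\gamma(r_*)|\neq1$; the condition $a_\gamma(r_*)=-1$ (ordinate action $y\mapsto-y$) can hold only at rational fixed points of involutions. This is visible in the paper's own computation leading to \eqref{fs0}: the word $T_p\circ g^{2K}$ fixing $s_0$ has multiplier $(-1)^p/(1+8Ks_0)$, never $-1$. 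The usable criterion is simply $f(r_*)=b_\gamma(r_*)/(1-a_\gamma(r_*))$, so that $f(r_*)=0$ exactly when the additive cocycle $b_\gamma(r_*)$ vanishes. (b) Even with that repair, the heart of the sufficiency direction — constructing a word fixing $1/\sqrt{16p(p+1)}$ and proving that its (palindromic) structure forces $b_\gamma(r_*)=0$ — is asserted, not carried out; note that in every case the paper actually computes, the additive part does \emph{not} cancel (that is precisely the origin of the nonzero values $1/2-K/q$ in Theorem~\ref{th3.2}), so such cancellation is special and needs a real argument. The necessity direction you concede entirely, and an appeal to ``equidistribution or irrationality'' is a hope rather than a method for a function that is everywhere discontinuous and has no closed form off the known orbits. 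So the conjecture remains open after your attempt, exactly as it does in the paper.
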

The Abel equations were solved via an iterative approach,
with the periodicity of $\psi$ and $f$ providing closure. 
For $f$, the closure equation \eqref{r0rkany}, $s_0=s_{2K}-p$, 
can be thought of applying $2K$ steps of the map $g$, starting 
from $s_0$, to obtain $s_{2K}$, then applying a shift of $-p$ units 
from $s_{2K}$, to get back to $s_0$. Other results can instead be obtained by alternating applications of the similarity map $g$ and the integer shift, as illustrated by the following Theorem.
\begin{theorem}\label{th4.1}
Let $s_0$ be given and define $s_{k+1}=g(s_k-1)$ for $k=1,2,\ldots$ 
Then 
\begin{equation}
f(s_{k+1}) = s_{k+1}\left(2+\frac{f(s_k)}{1-s_k}\right).
\end{equation}
\end{theorem}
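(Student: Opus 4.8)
The plan is to reduce the claim to the Abel equation \eqref{Abelf} satisfied by $G(r)=f(r)/(2r)$, evaluated at the shifted argument $s_k-1$, and then translate back to $f$ using the $1$-antiperiodicity of $f$. First I would set $u=s_k-1$, so that by the definition of the recursion $s_{k+1}=g(u)$. Applying \eqref{Abelf} with $r=u$ gives immediately $G(s_{k+1})=G(u)+1$, and this single application of the similarity relation is really the only substantive ingredient; everything else is bookkeeping.

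Next I would expand both sides using $G(r)=f(r)/(2r)$, obtaining $\tfrac{f(s_{k+1})}{2s_{k+1}}=\tfrac{f(u)}{2u}+1$, and solve for $f(s_{k+1})=s_{k+1}\bigl(f(u)/u+2\bigr)$. The remaining step is to re-express $f(u)=f(s_k-1)$ in terms of $f(s_k)$: the $1$-antiperiodicity \eqref{relation2}, in the form $f(r-1)=-f(r)$, gives $f(s_k-1)=-f(s_k)$, while setting $u=s_k-1$ in the denominator converts the minus sign into $f(s_k)/(1-s_k)$. Substituting then yields $f(s_{k+1})=s_{k+1}\bigl(2+f(s_k)/(1-s_k)\bigr)$, exactly as claimed.

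There is no serious obstacle here: the identity is essentially a one-line consequence of \eqref{Abelf} together with antiperiodicity, and the recursion simply alternates one application of the similarity map $g$ with an integer shift by $-1$, precisely the alternation foreshadowed in the paragraph preceding the statement. The only points requiring mild care are the sign bookkeeping in the antiperiodicity relation and the implicit non-degeneracy conditions under which the formula is meaningful—namely $s_k\neq1$ (so that $u\neq0$ and the factor $1/(1-s_k)$ is finite, forcing $s_{k+1}=g(u)\neq0$) together with the assumption that neither $s_k-1$ nor $s_{k+1}$ is a singularity of $f$.
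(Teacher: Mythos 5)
Your proposal is correct and follows exactly the paper's own argument: a single application of the Abel equation \eqref{Abelf} at $r=s_k-1$, rewritten via $G(r)=f(r)/(2r)$, combined with the $1$-antiperiodicity $f(s_k-1)=-f(s_k)$ to flip the sign and produce the factor $1/(1-s_k)$. The added remarks on non-degeneracy ($s_k\neq1$, avoidance of singularities of $f$) are sensible bookkeeping but do not change the substance.
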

\begin{proof}
The result is a direct consequence of the relation $G(s_{k+1})-G(s_k-1)=1$
obtained from the Abel equation \eqref{Abelf} with $r=s_k-1$ 
and the $1$-antiperiodicity of $f$, $f(s_k-1)=-f(s_k)$.
\end{proof}
All sequences $\{s_k\}_{k\ge0}$ considered in Theorem \eqref{th4.1} converge to $1/2$, regardless of the choice of $s_0$. For $s_0=1/2-1/(2\ell)$, $\ell=1$ or $2$, theorem~\ref{th4.1} yields, after $K$ iterations,
\begin{equation}\label{fsK2}
s_K = \frac{1}{2}-\frac{1}{4K+2\ell}, \quad 
f(s_K) = \frac{1}{4}\cdot\frac{1}{1-2s_K} + (2-\ell)\frac{1-2s_K}{4}.
\end{equation}
For $K=\ell=1$ one recovers \eqref{fonethird}. For large $K$, \eqref{fsK2}
also conforms to the asymptotic estimate around $r=1/2$ used in the model \eqref{rational_approx} of $f$. 
Note that $f(r_K)=\infty$ for $r_K=1/2-1/(4K+2\ell+1))$ with either $\ell=1$ or $\ell=2$. The composition of the unit translation to the left with $g$ maps the interval $[s_0,r_0)=[0,1/6)$ to $[s_1,r_1)=[1/3,5/14)$ ($\ell=1$)
and $[1/4,3/10)$ to $[3/8,7/18)$ ($\ell=2$), where $f$ can be observed to behave similarly in figure~\ref{fig:fsimilar}; more generally, this composition maps $[s_K,r_K)$ to $[s_{K+1},r_{K+1})$ ($\ell=1$ or $2$).

The similarity properties of the function $f$ play a central role in shaping up the fluid response in the fluid study that motivated this work \cite{GYWL20}. Multipole approximations of the form \eqref{rational_approx} may prove useful in understanding the multiscale nature of that response and obtain meaningful numerical approximations.
The approach used here can also be generalized to the study of other trigonometric Dirichlet series such as higher-order secant zeta functions, defined in \cite{LRR14}, and their antiperiodizations/antisymmetrizations.

\appendix
\section{$\psi(1/\sqrt{n})$ for $n=p(p+1)$ with $p$ even}
\label{appA}

Charollois \& Greenberg
provide the explicit expression 
\cite[Eq. 5 with $k=1$ and an additional factor $4/\pi^2$ resulting from the scaling in \eqref{psi}]{ChGr14}
\begin{equation}\label{A1}
\psi(r) = \frac{32}{\lambda(\lambda-1)}\sum_{j=1}^c 
\sum_{\ell=0}^3 \frac{B_\ell(x_j)}{\ell!}\frac{B_{3-\ell}(y_j)}{(3-\ell)!}
(-\lambda)^\ell,
\end{equation}
where $x_j=(j-1/4)/c$, $y_j=\{dx_j\}$ is the fractional part of $dx_j$,
$B_\ell$ is the Bernoulli polynomial of degree $\ell$ \cite[Table 24.2.2]{DLMF}, and $r$, $\lambda$, $c\in\mathbb{Z}$ and $d\in\mathbb{Z}$ are related via the eigenvalue problem
\begin{equation}\label{A2}
V\begin{bmatrix}2r\\1\end{bmatrix} 
= \lambda\begin{bmatrix}2r\\1\end{bmatrix} \quad\text{with}\quad
V=\begin{bmatrix}a&b\\c&d\end{bmatrix}
\end{equation}
for some $a\in\mathbb{Z}$ and $b\in\mathbb{Z}$ such that $ad-bc=1$ ($V\in\text{SL}_2(\mathbb{Z})$). 
For $r=1/\sqrt{n}$ with $n=p(p+1)$, \eqref{A2} holds with
\begin{equation}\label{A3}
V = \begin{bmatrix} 2p+1 & 4 \\ p(p+1) & 2p+1 \end{bmatrix},
\quad \lambda = 2p+1+2\sqrt{p(p+1)},
\end{equation}
but satisfies the assumption $[(2p+1)/4,1]=[1/4,0]V\equiv[1/4,0]$ 
mod[$\mathbb{Z}^2$] used in obtaining \eqref{A1} only when $p$ is even.
The following function implements \eqref{A1} in \textsc{Matlab}:
\begin{lstlisting} 
function psi = CG(p) |\textcolor{green!50!black}{\textbf{\% evaluation of $\psi(r)$ at $r=1/\sqrt{p(p+1)}$ using \eqref{A1})}}|
    n = p*(p+1); r = 1/sqrt(n);
    c = n; d = 2*p+1; 
    lambda = 2*r*c+d; 
    j = 1:c; x = (j-1/4)/c; y = d*x; y = y-floor(y);
    B = @(l,x)bernoulli(l,x)/factorial(l);
    s = 0*j;
    for l = 3:-1:0
        s = B(l,x).*B(3-l,y)-lambda*s; |\textcolor{green!50!black}{\textbf{\% H\"orner's rule}}|
    end
    psi = sum(s)*32/(lambda*(lambda-1));
end
\end{lstlisting}
It is compared with a direct evaluation of the (truncated) series \eqref{psi}:
\begin{lstlisting}
function psi = direct(p)
|\textcolor{green!50!black}{\textbf{\% evaluation of $\psi(r)$ at $r=1/\sqrt{p(p+1)}$ using a truncated series \eqref{psi})}}|
    r = 1/sqrt(p*(p+1));
    n = 1:100000000;
    s = sum(1./(n.^2.*cos(n*pi*r)));
    psi = s*4/pi^2;
end
\end{lstlisting}
As expected, the results match for even $p$ only: \\[5pt]
%\begin{center}
\setlength{\tabcolsep}{8pt}
\begin{tabular}{llll}
\begin{lstlisting}
>> format rat
>> p = 1;
>> CG(p) 
ans =
   |\textbf{713/605}|

>> direct(p)
ans =
    |\textbf{-5/6}|
\end{lstlisting}
&
\begin{lstlisting}

>> p = 2;
>> CG(p) 
ans =
     |\textbf{3/2}|     

>> direct(p)
ans =
     |\textbf{3/2}|     
\end{lstlisting}
&
\begin{lstlisting}

>> p = 3;
>> CG(p)
ans =
   |\textbf{-130/1351}|  

>> direct(p)
ans =
     |\textbf{1/12}|
\end{lstlisting}
&
\begin{lstlisting}

>> p = 4;
>> CG(p)
and =
    |\textbf{67/60}|    

>> direct(p)
ans =
    |\textbf{67/60}|    
\end{lstlisting}
\end{tabular}
%\end{center}
%The values obtained directly for any $p$ or with $\texttt{CG}$ for even $p$ match those listed in table~\ref{table1}.
%The formula \eqref{A1} can be extended to hold for odd $p$ as well, see Appendix \eqref{appB}.

\section{$\psi(1/\sqrt{n})$ for $n=p(p+1)$ revisited}
\label{appB}

The explicit evaluation of $\psi(1/\sqrt{n})$ with $n=p(p+1)$ in Appendix \ref{appA} does not match the result of the formula \eqref{A1} derived in \cite{ChGr14} for odd $p$. The reason is that $[1/4,0]$ is only a left eigenvector, modulo $\mathbb{Z}^2$, of the matrix $V\in\text{SL}_2(\mathbb{Z})$ from \eqref{A2} when $p$ is even. To remedy the situation, it is natural to consider replacing $V$ by 
\begin{equation}\label{B3}
V^2 = \begin{bmatrix} (2p+1)^2+4p(p+1) & 8(2p+1) \\ 2p(p+1)(2p+1) & (2p+1)^2+4p(p+1) \end{bmatrix}.
\end{equation}
Then, clearly, $[2p^2+2p+1/4,4p+2]=[1/4,0]V^2\equiv[1/4,0]$ 
mod[$\mathbb{Z}^2$] for any $p$ and the formula \eqref{A1} now holds for all $p$.
Only the coefficients $c$ and $d$ of the second row of $V$ need to be updated and replaced by those in the second row of $V^2$. The modified \textsc{Matlab} function \text{CG} reads: 
\begin{lstlisting} 
function psi = CG2(p) 
|\textcolor{green!50!black}{\textbf{\% evaluation of $\psi(r)$ at $r=1/\sqrt{p(p+1)}$ using \eqref{A1})}}|
    n = p*(p+1); r = 1/sqrt(n);
    c = n; d = 2*p+1; 
    c = 2*c*d; d = d^2+4*n; |\textcolor{green!50!black}{\textbf{\% added line}}|
    lambda = 2*r*c+d; 
    j = 1:c; x = (j-1/4)/c; y = d*x; y = y-floor(y);
    B = @(l,x)bernoulli(l,x)/factorial(l);
    s = 0*j;
    for l = 3:-1:0
        s = B(l,x).*B(3-l,y)-lambda*s; % |\textcolor{green!50!black}{H\"orner's rule}|
    end
    psi = sum(s)*32/(lambda*(lambda-1));
end
\end{lstlisting}
The results now match for all integers $p$: \\[5pt]
%\begin{center}
\setlength{\tabcolsep}{15pt}
\begin{tabular}{llll}
\begin{lstlisting}
>> format rat
>> p = 1;
>> CG2(p) 
ans =
    |\textbf{-5/6}|
\end{lstlisting}
&
\begin{lstlisting}

>> p = 2;
>> CG2(p) 
ans =
     |\textbf{3/2}|     
\end{lstlisting}
&
\begin{lstlisting}

>> p = 3;
>> CG2(p)
ans =
     |\textbf{1/12}|
\end{lstlisting}
&
\begin{lstlisting}

>> p = 4;
>> CG2(p)
ans =
    |\textbf{67/60}|    
\end{lstlisting}
\end{tabular} \\[5pt]
%\end{center}
Note that the value \texttt{lambda} computed in \texttt{CG2} corresponds to the eigenvalue $\lambda^2$ of $V^2$; it does not need to be updated, since it uses the updated values \texttt{c} and \texttt{d} of $c$ and $d$.

The squaring from $V$ in \eqref{A3} to $V^2$ in \eqref{B3} is essentially the modular equivalent to the strategy presented in \S\ref{sec2} of this work, using periodicity after an even number of steps $2K$ to evaluate $\psi$ at $r_K$. The values obtained for $p$ odd and $p$ even match the columns in Table \ref{table1} associated with $q=-1$ and $q=1$, respectively.
It remains unclear how to select $V$ fullfilling appropriate conditions to extend \eqref{A1} to match other columns of Table \ref{table1} and,
more generally, to other values of $n$. It is also unclear 
how to use \eqref{A1} in order to evaluate \eqref{f} itself for $r=1/\sqrt{p(p+1)}$ via either \eqref{relation2} or \eqref{relation}, since neither $2r$ nor $r+1$ is of the form $1/\sqrt{n'}$ with $n'=p'(p'+1)$ for some integer $p'$.

\section{The value of $f(1/4)$ in \eqref{fonefourth}}
\label{appC}

Using the sine double angle formula and 
\cite[Eq. 24.8.5 \& Table 24.2.2]{DLMF}
\begin{align*}
f(1/4) 
&= \sum_{k\ge0}
\frac{2\sin[(2k+1)\pi/4]}{(2k+1)^2\sin[(2k+1)\pi/2]}
=2\sum_{k\ge0} \frac{(-1)^k\sin[(2k+1)\pi/4]}{(2k+1)^2}\\
&=2\sum_{k\ge0} \frac{\cos[(2k+1)\pi/4]}{(2k+1)^2}
=-2E_1(1/4)=1/2,
\end{align*}
where $E_1(x)=x-1/2$ is the Euler polynomial of degree 1. 
This value can be verified with \textsc{Matlab}:
\begin{lstlisting}
>> format rat
>> k = 1:2:10000000; 
>> f = @(r)sum(1./(k.^2.*cos(k*pi*r)))*4/pi^2;
>> f(1/4)
ans =
   |\textbf{1/2}|
\end{lstlisting}

\section{The approximation \eqref{rational_approx}}
\label{appD}

The Euler infinite product \cite[Eq. 4.22.2]{DLMF}
\begin{equation}
\cos[(2k+1)\pi r] 
= \prod_{\ell\in\mathbb{Z}}\left(1-\frac{2r(2k+1)}{2\ell+1}\right)
\end{equation}
yields the partial fraction decomposition
(equivalent to \cite[Eq. 4.22.5]{DLMF})
\begin{equation}
\frac{1}{\cos[(2k+1)\pi r]}
=\frac{2}{\pi}\sum_{\ell\in\mathbb{Z}} \frac{(-1)^\ell}{2\ell+1-2r(2k+1)}.
\end{equation}
Therefore,
\begin{equation}
f(r)=\frac{8}{\pi^3}\sum_{\ell\in\mathbb{Z}}(-1)^\ell\sum_{k\ge0} \frac{1}{(2k+1)^2}\;\frac{1}{2\ell+1-2r(2k+1)}.
\end{equation}
The behavior at $r_p=1/(4p+2)$ is determined by considering the term 
$k=p(2\ell+1)+\ell$ in the inner sum, which leads to
\begin{equation}
f(r)\sim\frac{1}{(2p+1)^2}\left[\frac{8}{\pi^3}
\sum_{\ell\in\mathbb{Z}}\frac{(-1)^\ell}{(2\ell+1)^3}\right]
\;\frac{1}{1-r(4p+2)} = \frac{r_p^2}{1-r/r_p}
\quad\text{at}\quad r\sim r_p=\frac{1}{4p+2}
\end{equation}
using the value $-E_2(1/4)=1/4$ for the bracketed series
in terms of the Euler polynomial $E_2(x)=x^2-x$ of degree 2
\cite[Eq. 24.8.4 \& Table 24.2.2]{DLMF}.

\vskip6pt

\enlargethispage{20pt}

\begin{ack}
This work has benefited from discussions with the author's colleagues Juan M. Lopez and Jason Yalim in the School of Mathematical \& Statistical Sciences at Arizona State University, who provided insightful comments and suggested improvements to an early draft of the manuscript.
\end{ack}

\vskip2pc

\bibliographystyle{RS}
\bibliography{local}

\end{document}